\newtheorem{theorem}{Theorem}
\newtheorem{lemma}[theorem]{Lemma}
\newtheorem{corollary}[theorem]{Corollary}
\title{Additive Tree $O(\rho\log n)$-Spanners from Tree Breadth $\rho$}
\author{Oliver Bendele and Dieter Rautenbach\\[3mm]
\normalsize Institute of Optimization and Operations Research, Ulm University, Germany\\
\texttt{$\{$oliver.bendele,dieter.rautenbach$\}$@uni-ulm.de}}
\date{}
\begin{document}
\maketitle
\onehalfspace

\begin{abstract}
The tree breadth ${\rm tb}(G)$ of a connected graph $G$ 
is the smallest non-negative integer $\rho$
such that $G$ has a tree decomposition 
whose bags all have radius at most $\rho$.
We show that, given a connected graph $G$ 
of order $n$ and size $m$,
one can construct 
in time $O(m\log n)$ 
an additive tree $O\big({\rm tb}(G)\log n\big)$-spanner of $G$,
that is, a spanning subtree $T$ of $G$ 
in which 
$d_T(u,v)\leq d_G(u,v)+O\big({\rm tb}(G)\log n\big)$
for every two vertices $u$ and $v$ of $G$.
This improves earlier results of Dragan and K\"{o}hler 
(Algorithmica 69 (2014) 884-905),
who obtained a multiplicative error of the same order,
and of Dragan and Abu-Ata
(Theoretical Computer Science 547 (2014) 1-17),
who achieved the same additive error with a collection of $O(\log n)$ trees.\\[2mm]
{\bf Keywords:} additive tree spanner; multiplicative tree spanner; tree breadth; tree length\\[2mm]
{\bf AMS subject classification:} 05C05, 05C12, 05C85
\end{abstract}

\section{Introduction}

In the present paper we show how to construct
in time $O(m\log n)$,
for a given connected graph $G$ of order $n$ and size $m$,
a tree spanner 
that approximates all distances 
up to some additive error of the form $O(\rho\log n)$,
where $\rho$ is the so-called tree breadth of $G$ \cite{drko}.
Our result improves a result of Dragan and K\"{o}hler \cite{drko}
who show that one can construct in time $O(m\log n)$
a multiplicative tree $O(\rho\log n)$-spanner 
for a given graph $G$ as above,
that is, we improve their multiplicative error 
to an additive one of the same order.
Our result also improves a result by Dragan and Abu-Ata \cite{drab} 
who show how to efficiently construct 
$O(\log n)$ collective additive tree $O(\rho\log n)$-spanners 
for a given graph $G$ as above.
Note that they obtain the same additive error bound
but require several spanning trees 
that respect this bound only collectively,
more precisely, for every pair of vertices, 
there is a tree in the collection 
that satisfies the distance condition for this specific pair.
Not restricting the spanners to trees allows better guarantees;
Dourisboure, Dragan, Gavoille, and Yan \cite{dodrgaya}, 
for instance, 
showed that every graph $G$ as above
has an additive $O(\rho)$-spanner with $O(\rho n)$ edges.
For more background on additive and multiplicative (collective) (tree) spanners
please refer to \cite{dodrgaya,drab,drko,krlemuprwa,caco,dryaco,drcokoxi}
and the references therein.

Before we come to our results in Section 2,
we collect some terminology and definitions.
We consider finite, simple, and undirected graphs.
Let $G$ be a connected graph.
The 
{\it vertex set},
{\it edge set},
{\it order}, and 
{\it size} of $G$ are denoted by 
$V(G)$,
$E(G)$,
$n(G)$, and $m(G)$,
respectively.
The {\it distance} in $G$ between two vertices $u$ and $v$ of $G$
is denoted by $d_G(u,v)$.
For a vertex $u$ of $G$ and a set $U$ of vertices of $G$,
the {\it distance} in $G$ between $u$ and $U$ is 
$$d_G(u,U)=\min\big\{ d_G(u,v):v\in U\big\},$$
and the {\it radius ${\rm rad}_G(U)$} of $U$ in $G$ is 
$$\min\big\{\max\big\{ d_G(u,v):v\in U\big\}:u\in V(G)\big\},$$
that is, it is the smallest radius of a ball around some vertex $u$ of $G$
that contains all of $U$.
Note that the vertex $u$ in the preceding minimum 
is not required to belong to $U$, and that all distances
are considered within $G$.

Let $H$ be a subgraph of $G$.
For a non-negative integer $k$, 
the subgraph $H$ is {\it $k$-additive} 
if 
\begin{eqnarray}\label{e1}
d_H(u,v)\leq d_G(u,v)+k
\end{eqnarray}
for every two vertices $u$ and $v$ of $H$.
If, additionally, the subgraph $H$ is {\it spanning},
that is, it has the same vertex set as $G$,
then $H$ is an {\it additive $k$-spanner} of $G$.
Furthermore, if, again additionally, the subgraph $H$ is a tree, 
then $H$ is an {\it additive tree $k$-spanner} of $G$.
Replacing the inequality (\ref{e1}) with 
$$d_H(u,v)\leq k\cdot d_G(u,v)$$
yields the notions of 
a {\it $k$-multiplicative} subgraph,
a {\it multiplicative $k$-spanner}, and
a {\it multiplicative tree $k$-spanner} of $G$, respectively.

For a tree $T$, let $B(T)$ be the set of vertices of $T$ 
of degree at least $3$ in $T$, the so-called {\it branch} vertices,
and let $L(T)$ be the set of leaves of $T$.

A {\it tree decomposition} of $G$ is a pair $\left(T,(X_t)_{t\in V(T)}\right)$,
where $T$ is a tree and $X_t$ is a set of vertices of $G$ for every vertex $t$ of $T$
such that 
\begin{itemize}
\item for every vertex $u$ of $G$, the set 
$\big\{ t\in V(T):u\in X_t\big\}$
induces a non-empty subtree of $T$, and
\item for every edge $uv$ of $G$, 
there is some vertex $t$ of $T$ such that $u$ and $v$ both belong to $X_t$.
\end{itemize}
The set $X_t$ is usually called the {\it bag} of $t$.
The maximum radius
$$\max\big\{{\rm rad}_G(X_t):t\in V(T)\big\}$$
of a bag of the tree decomposition
is the {\it breadth} of this decomposition,
and the {\it tree breadth ${\rm tb}(G)$} of $G$ \cite{drko}
is the minimum breadth of a tree decomposition of $G$.
While the tree breadth is an NP-hard parameter \cite{duleni},
one can construct in linear time,
for a given connected graph $G$,
a tree decomposition of breadth at most $3{\rm tb}(G)$ \cite{abdr},
cf.~also \cite{doga,drko,chdr} involving the related notion of {\it tree length}.

\section{Results}

For a tree $T$,
let ${\rm pbt}(T)$ be the maximum depth
of a perfect binary tree 
that is a topological minor of $T$.
In some sense ${\rm pbt}(T)$ quantifies how much $T$ differs from a path.

Our main result is the following.

\begin{theorem}\label{theorem1}
Given a connected graph $G$ of size $m$ and a tree decomposition 
$\left(T,(X_t)_{t\in V(T)}\right)$ of $G$ of breadth $\rho$,
one can construct 
in time $O\big(m\cdot {\rm pbt}(T)\big)$ 
an additive tree $8\rho\big(2{\rm pbt}(T)+1\big)$-spanner of $G$.
\end{theorem}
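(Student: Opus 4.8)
The plan is to build the spanner by recursion on the tree decomposition, using a separator bag whose removal splits $T$ (and hence $G$) in a balanced way, exactly the divide-and-conquer strategy underlying the $O(\log n)$ factors in \cite{drko,drab}. Fix a centroid vertex $t^\ast$ of $T$, or more precisely a bag $X_{t^\ast}$ such that each component of $T-t^\ast$ carries a ``small'' part of the decomposition; removing $X_{t^\ast}$ disconnects $G$ into pieces $G_1,\dots,G_k$, one inside each subtree of $T-t^\ast$ (plus the bag itself). Since $X_{t^\ast}$ has radius at most $\rho$, pick a center $c$ with $X_{t^\ast}\subseteq B_G(c,\rho)$ and let $S$ be a BFS tree of $G$ rooted at $c$ truncated to $B_G(c,\rho)$; this is a ``local'' tree of radius $\rho$ spanning $X_{t^\ast}$ and its diameter is at most $2\rho$. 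Recurse on each piece $G_i$ (augmented by the vertices of $X_{t^\ast}$ it is adjacent to, so that distances through the separator are visible), obtaining a tree $T_i$; then glue all the $T_i$ to $S$ along shared separator vertices to form the global tree $T_G$.

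The key point is the distance bound. For vertices $u,v$ lying in the same piece, the bound follows by induction. For $u,v$ in different pieces (or one of them in the separator bag), every $u$--$v$ path in $G$ must pass through $X_{t^\ast}$, say through a vertex $w$ of the bag; so $d_G(u,v)=d_G(u,w')+d_G(w'',v)$ for appropriate separator vertices, and in $T_G$ one routes $u\to (\text{entry point in }X_{t^\ast})\to (\text{exit point})\to v$, where the middle hop costs at most $\mathrm{diam}(S)\le 2\rho$ because $S$ spans all of $X_{t^\ast}$ within radius $\rho$ of $c$. The first and last legs are handled by the inductive guarantee inside $G_u$ and $G_v$ respectively, with the separator vertices added in so that $d_{G_u}(u,w')=d_G(u,w')$. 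Unwinding the recursion, each vertex pair's path crosses at most $\mathrm{pbt}(T)$ many separator bags along the recursion tree (this is exactly why $\mathrm{pbt}(T)$, the perfect-binary-tree depth, is the right parameter rather than $\log n$: the recursion depth along any root-to-leaf branch that a fixed pair follows is controlled by how ``bushy'' $T$ is), and at each such crossing we pay an additive $\le 2\rho$ for the detour plus a bounded number of extra $\rho$'s for reconciling the local BFS-tree distances with the true ones. Summing, the total additive error is $O(\rho\cdot\mathrm{pbt}(T))$; a careful accounting of the constants — one factor for the bag detour, one for the two endpoints of each local tree, one for the $\pm\rho$ slack between $d_{G_i}$ and $d_G$ through the separator — yields the stated $8\rho(2\mathrm{pbt}(T)+1)$.

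For the running time, at each recursion level one computes a balanced separator in the decomposition tree and a truncated BFS ball, both linear in the size of the current subgraph; since the pieces at a given ``binary level'' are vertex-disjoint apart from separator vertices (which must be charged carefully but contribute only an $O(\rho)$-bounded overhead per piece), the work per level is $O(m)$, and there are $O(\mathrm{pbt}(T))$ levels, for a total of $O(m\cdot\mathrm{pbt}(T))$.

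I expect the main obstacle to be the bookkeeping that makes the induction go through cleanly: one must augment each piece $G_i$ with just enough of the separator (and with the right ``portals'') so that (i) distances inside $G_i$ between original vertices and portals equal the corresponding $G$-distances, (ii) the recursively produced $T_i$ glues to $S$ without creating cycles, and (iii) the accumulated error telescopes to a clean multiple of $\rho\,\mathrm{pbt}(T)$. Getting a recursion-invariant of the form ``$T_i$ is an additive $8\rho(2d_i+1)$-spanner of $G_i^+$, where $d_i$ is the pbt-depth of the $i$-th residual decomposition tree'' — and checking that pasting along a radius-$\rho$ bag raises $d_i$ by at most $1$ while adding at most $8\rho\cdot 2$ to the error — is the crux; everything else is BFS and centroid-finding.
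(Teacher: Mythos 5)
Your divide-and-conquer plan via centroid bags of $T$ has a genuine gap at its central claim: that the recursion depth (equivalently, the additive error accumulated by a fixed pair) is bounded by ${\rm pbt}(T)$ rather than by $\log n(T)$. A centroid decomposition of the decomposition tree $T$ has depth $\Theta(\log n(T))$ even when $T$ is a path, in which case ${\rm pbt}(T)\leq 1$ and the theorem promises an additive $O(\rho)$-spanner. In your scheme, when a pair $u,v$ is first split by a centroid bag at some level, the two legs $u\to w'$ and $w''\to v$ are controlled only by the inductive guarantee for the pieces containing them, and that guarantee degrades with the depth of the remaining recursion, which for a path-shaped $T$ is still $\Theta(\log n)$. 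So your argument, made rigorous, yields an additive $O(\rho\log n)$-spanner (essentially the bound of Corollary \ref{corollary1}, or an additive version of the Dragan--K\"{o}hler error), but not $8\rho\big(2{\rm pbt}(T)+1\big)$; the parenthetical assertion that ``the recursion depth along any root-to-leaf branch that a fixed pair follows is controlled by how bushy $T$ is'' is exactly the unproved step, and it is false for centroid recursion. The constant $8\rho\big(2{\rm pbt}(T)+1\big)$ is also asserted rather than derived.

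The paper avoids this by working bottom-up rather than top-down. It defines the nested sequence $T_0\supset T_1\supset\cdots\supset T_{{\rm d}(T)}$, where each $T_{i+1}$ is the minimal subtree spanning the branch vertices of $T_i$, and proves ${\rm d}(T)={\rm pbt}(T)$ (Lemma \ref{lemma3}); it then grows a subtree $S_d\subset S_{d-1}\subset\cdots\subset S_0$ of $G$ from the innermost core outward, where at each step a single BFS (Lemma \ref{lemma1}) attaches shortest paths to all not-yet-covered leaf bags of $T_{i-1}$ simultaneously, and a tree-decomposition argument shows the additive error grows by only $16\rho$ per layer, independently of how many bags are attached in that layer (Lemma \ref{lemma4}). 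Finally, Lemma \ref{lemma2} completes $S_0$ to a spanning tree at a further cost of $8\rho$. If you want to salvage a top-down recursion, you would need a separator scheme whose depth is genuinely ${\rm pbt}(T)$ --- for instance peeling along the same sequence $T_i$ --- at which point you have essentially reconstructed the paper's argument.
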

Some immediate consequences of Theorem \ref{theorem1} are the following.

\begin{corollary}\label{corollary1}
Given a connected graph $G$ of order $n$ and size $m$,
one can construct 
in time $O\big(m\log n\big)$ an 
additive tree 
$O\big({\rm tb}(G)\log n\big)$-spanner of $G$.
\end{corollary}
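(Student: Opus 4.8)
The plan is to obtain Corollary~\ref{corollary1} as a direct application of Theorem~\ref{theorem1}: it suffices to feed Theorem~\ref{theorem1} a tree decomposition of small breadth and to observe that for such a decomposition the parameter ${\rm pbt}(T)$, which governs both the additive error and the running time in Theorem~\ref{theorem1}, is $O(\log n)$.

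First I would run the linear-time algorithm of \cite{abdr} on $G$ to construct a tree decomposition $\left(T,(X_t)_{t\in V(T)}\right)$ of breadth $\rho\leq 3\,{\rm tb}(G)$; since $G$ is connected this costs $O(n+m)=O(m)$ time. Next I would ensure that $n(T)=O(n)$, which I may assume either because a linear-time algorithm outputs a decomposition of linear total size, or, if needed, by the standard reduction to a small tree decomposition: repeatedly contract an edge $tt'$ of $T$ with $X_t\subseteq X_{t'}$, which never increases the breadth and terminates with at most $n$ bags. The key observation is then the bound on ${\rm pbt}(T)$: a perfect binary tree of depth $d$ has $2^{d+1}-1$ vertices, and a topological minor embeds its vertex set injectively into $V(T)$, so $2^{{\rm pbt}(T)+1}-1\leq n(T)=O(n)$ and hence ${\rm pbt}(T)=O(\log n)$.

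Finally I would apply Theorem~\ref{theorem1} to $G$ and $\left(T,(X_t)_{t\in V(T)}\right)$. This yields, in time $O\big(m\cdot{\rm pbt}(T)\big)=O(m\log n)$, an additive tree $8\rho\big(2\,{\rm pbt}(T)+1\big)$-spanner of $G$; since $\rho\leq 3\,{\rm tb}(G)$ and ${\rm pbt}(T)=O(\log n)$, its additive error is $O\big({\rm tb}(G)\log n\big)$, and adding the $O(m)$ preprocessing time keeps the total running time at $O(m\log n)$. The only point needing care is guaranteeing that the decomposition passed to Theorem~\ref{theorem1} has $O(n)$ bags, i.e.\ that ${\rm pbt}(T)=O(\log n)$; everything else is a substitution into Theorem~\ref{theorem1}. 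For completeness one may note the degenerate case ${\rm tb}(G)=0$, which forces $G=K_1$ and makes the statement trivial, whereas every connected graph with an edge satisfies ${\rm tb}(G)\geq 1$, so that $8\rho\big(2\,{\rm pbt}(T)+1\big)$ is genuinely $O\big({\rm tb}(G)\log n\big)$.
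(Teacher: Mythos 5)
Your proposal is correct and follows essentially the same route as the paper: construct a breadth-$(3\,{\rm tb}(G))$ decomposition in linear time via \cite{abdr}, contract edges $st$ with $X_s\subseteq X_t$ to ensure $n(T)\leq n$, bound ${\rm pbt}(T)$ by $O(\log n)$ via the vertex count $2^{{\rm pbt}(T)+1}-1$ of a perfect binary tree, and substitute into Theorem~\ref{theorem1}. The extra remarks on the degenerate case ${\rm tb}(G)=0$ and on the preprocessing time are harmless additions, not deviations.
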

\begin{proof}
As observed towards the end of the introduction, given $G$,
one can construct in linear time 
a tree decomposition $\left(T,(X_t)_{t\in V(T)}\right)$ 
of $G$ of breadth at most $3{\rm tb}(G)$.
Possibly by contracting edges $st$ of $T$
with $X_s\subseteq X_t$,
we may assume that $n(T)\leq n$.
Since a perfect binary tree of depth $b$ has 
$2^{b+1}-1$ vertices, it follows that 
$2^{{\rm pbt}(T)+1}-1\leq n(T)\leq n$,
and, hence, 
$${\rm pbt}(T)\leq \log_2(n+1)-1.$$
Applying Theorem \ref{theorem1} allows to construct 
in time $O\big(m\cdot {\rm pbt}(T)\big)=O\big(m\log n\big)$
an additive tree $24{\rm tb}(G)\big(2\log_2(n+1)-1\big)$-spanner of $G$.
\end{proof}

\begin{corollary}\label{corollary2}
Given a connected graph $G$ of order $n$ and size $m$
and a multiplicative tree $k$-spanner $T$ of $G$,
one can construct 
in time $O\big(m n\big)$ an 
additive tree $O\big(k\log n\big)$-spanner of $G$.
\end{corollary}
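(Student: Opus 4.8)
The plan is to use the given multiplicative tree $k$-spanner $T$ to manufacture a tree decomposition of $G$ of breadth at most $k$ whose underlying tree is $T$ itself, and then to invoke Theorem \ref{theorem1}; equivalently, this construction shows ${\rm tb}(G)\leq k$, so one could instead invoke Corollary \ref{corollary1}. The only point that has to be checked is that the natural family of bags built from $T$ really is a tree decomposition of small radius; everything after that is bookkeeping.

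Concretely, I would root $T$ at an arbitrary vertex $r$ and set, for each $t\in V(T)$,
$$X_t=\big\{w\in V(G): w \text{ is a descendant of } t \text{ in the rooted tree } T \text{ (with } w=t \text{ permitted), and } d_T(w,t)\leq k\big\}.$$
For a fixed vertex $u$, the set $\{t\in V(T):u\in X_t\}$ is exactly the set of ancestors of $u$ at $T$-distance at most $k$, which is a non-empty contiguous subpath of the $r$-$u$-path in $T$ and hence induces a subtree of $T$. For an edge $uv$ of $G$, letting $w$ be the least common ancestor of $u$ and $v$ in $T$, one has $d_T(u,w)+d_T(w,v)=d_T(u,v)\leq k\cdot d_G(u,v)=k$, so $u$ and $v$ are both descendants of $w$ at $T$-distance at most $k$, i.e.\ $u,v\in X_w$. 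Finally, since $T$ is a spanning subgraph of $G$ we have $d_G(t,w)\leq d_T(t,w)\leq k$ for every $w\in X_t$, so ${\rm rad}_G(X_t)\leq k$; hence $\left(T,(X_t)_{t\in V(T)}\right)$ is a tree decomposition of $G$ of breadth at most $k$.

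Since $T$ is spanning, $n(T)=n$, so the counting from the proof of Corollary \ref{corollary1} (a perfect binary tree of depth $b$ has $2^{b+1}-1$ vertices) gives ${\rm pbt}(T)\leq\log_2(n+1)-1$. Applying Theorem \ref{theorem1} to $\left(T,(X_t)_{t\in V(T)}\right)$ then yields an additive tree $8k\big(2{\rm pbt}(T)+1\big)$-spanner of $G$, and $8k\big(2{\rm pbt}(T)+1\big)\leq 8k\big(2\log_2(n+1)-1\big)=O(k\log n)$. For the running time, the bags $X_t$ can be built in $O(n^2)$ time — for instance, for each vertex $v$ one walks up the $r$-$v$-path and inserts $v$ into the bag of each of its (at most $n$) ancestors lying within $T$-distance $k$ — and since $m\geq n-1$ this, together with the $O\big(m\cdot{\rm pbt}(T)\big)=O(m\log n)$ cost of Theorem \ref{theorem1}, is $O(mn)$. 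The one genuine subtlety is the choice of bags: taking \emph{descendants} within distance $k$ (rather than ancestors, or full subtrees) is precisely what makes the subtree property, the edge-covering property, and the radius bound hold simultaneously; I expect this to be the only step requiring care, with the rest a direct application of Theorem \ref{theorem1}.
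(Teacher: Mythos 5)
Your proposal is correct and follows essentially the same route as the paper: turn the multiplicative tree $k$-spanner $T$ into a tree decomposition of $G$ over the tree $T$ itself with bags of $T$-radius $O(k)$, bound ${\rm pbt}(T)$ by $\log_2(n+1)-1$, and apply Theorem \ref{theorem1}. The only (immaterial) difference is your choice of bags --- descendant-balls of radius $k$ after rooting $T$, verified via least common ancestors --- where the paper simply takes $X_u=\{v: d_T(u,v)\leq\lceil k/2\rceil\}$, which avoids the rooting and gives breadth $\lceil k/2\rceil$ instead of $k$; both yield the stated $O(k\log n)$ additive error and $O(mn)$ running time.
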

\begin{proof}
For every vertex $u$ of $G$, let $X_u$ be the set 
containing all vertices $v$ of $G$ with $d_T(u,v)\leq \left\lceil\frac{k}{2}\right\rceil$.
Since $T$ is a multiplicative tree $k$-spanner,
it follows easily that $\left(T,(X_t)_{t\in V(T)}\right)$ 
is a tree decomposition of $G$ of breadth at most $\left\lceil\frac{k}{2}\right\rceil$,
cf.~also \cite{drko}.
Note that $(X_t)_{t\in V(T)}$ can be determined by $n$ 
breadth first searches, each of which requires $O(m)$ time.
Applying Theorem \ref{theorem1} allows to construct 
in time $O\big(m\cdot {\rm pbt}(T)\big)=O\big(m\log n\big)$
an additive tree $O\big(k\log n\big)$-spanner of $G$.
\end{proof}
Note that if the tree $T$ in Theorem \ref{theorem1} is a path,
then we obtain an additive tree $O(\rho)$-spanner.
Kratsch et al.~\cite{krlemuprwa} constructed a sequence of outerplanar chordal graphs $G_1,G_2,\ldots$,
which limit the extend to which Theorem \ref{theorem1} can be improved.
The graph $G_1$ is a triangle, and, for every positive integer $k$,
the graph $G_{k+1}$ arises from $G_k$ by adding, 
for every edge $uv$ of $G_k$ that contains a vertex of degree $2$ in $G_k$,
a new vertex $w$ that is adjacent to $u$ and $v$;
cf.~Figure \ref{fig:snowflake} for an illustration.
It is easy to see $n(G_k)=3\cdot 2^{k-1}$ and that ${\rm tb}(G_k)=1$ for every positive integer $k$,
in particular, we have $k-1=\log_2\left(\frac{n(G_k)}{3}\right)$.
Now, Kratsch et al.~showed that $G_k$ admits no additive tree $(k-1)$-spanner,
that is, the graph $G_k$ admits no additive tree ${\rm tb}(G_k)\log_2\left(\frac{n(G_k)}{3}\right)$-spanner.

\tikzstyle{sfvertex}=[circle, draw, fill=black!100,inner sep=0pt, minimum width=3pt]
\begin{figure}[H]\centering	
\begin{tikzpicture}[thick,scale=0.5]
\draw \foreach \x in {30,150,270}{
(\x:2) node[sfvertex] {}  -- (\x+120:2)
(\x:2) node[sfvertex] {} -- (\x-60:3)
(\x:2) node[sfvertex] {} -- (\x+60:3)
(\x+60:3) node[sfvertex] {}
};
\end{tikzpicture}\hspace{1.5cm}
\begin{tikzpicture}[thick,scale=0.5]
\draw \foreach \x in {30,150,270}{
(\x:2) node[sfvertex] {}  -- (\x+120:2)
(\x:2) node[sfvertex] {} -- (\x-60:3)
(\x:2) node[sfvertex] {} -- (\x+60:3)
(\x-60:3) node[sfvertex] {} -- (\x-90:4) node[sfvertex] {}
(\x+60:3) node[sfvertex] {} -- (\x+90:4) node[sfvertex]{}
(\x:2) node[sfvertex] {}  -- (\x+30:4)
(\x:2) node[sfvertex] {} -- (\x-30:4)};
\end{tikzpicture}
\caption{The graphs $G_2$ and $G_3$.}\label{fig:snowflake}
\end{figure}
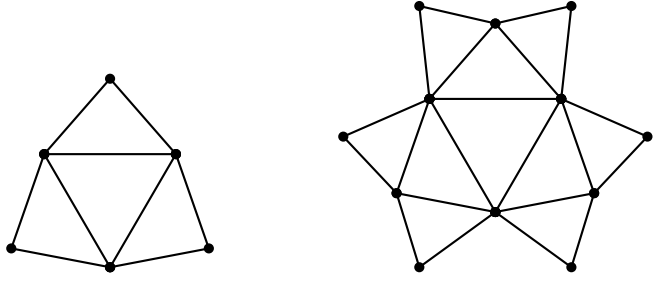
Our proof of Theorem \ref{theorem1} relies on four lemmas.
The first is a simple consequence of elementary properties of breadth first search

\begin{lemma}\label{lemma1}
Given a connected graph $G$ of size $m$,
a subtree $S$ of $G$,
and a set $U$ of vertices of $G$,
one can construct in time $O(m)$
a subtree $S'$ of $G$ containing $S$ 
as well as all vertices from $U$ such that 
\begin{enumerate}[(i)]
\item $d_{S'}(u,V(S))=d_G(u,V(S))$
for every vertex $u$ in $U$, and
\item $L(S')\subseteq L(S)\cup U$.
\end{enumerate}
\end{lemma}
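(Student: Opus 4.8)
Concretely, I would form the auxiliary graph obtained from $G$ by contracting the subtree $S$ into a single source vertex $s$ (equivalently, run a multi-source BFS with all vertices of $V(S)$ at level $0$). This BFS computes, for every vertex $u$ of $G$, the value $d_G(u,V(S))$ together with a BFS-parent $p(u)$, a neighbour of $u$ on some shortest path from $u$ to $V(S)$, chosen so that $p(u)\in V(S)$ whenever $d_G(u,V(S))=1$. The whole search runs in time $O(m)$. From the parent pointers one gets, for each vertex $u\notin V(S)$, a canonical shortest $u$--$V(S)$ path $P_u=u,p(u),p(p(u)),\dots$ that hits $V(S)$ exactly once, at its last vertex.

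**Next I would define $S'$ as the union of $S$ with the paths $P_u$ for all $u\in U$.** I claim this union is a subtree of $G$. It is connected because each $P_u$ connects $u$ to a vertex of the connected graph $S$. It is acyclic because edges outside $S$ are parent edges of the BFS forest rooted at $V(S)$: following parent pointers strictly decreases the value $d_G(\cdot,V(S))$, so no cycle can use only non-$S$ edges, and any cycle would have to enter and leave $S$ — but $S$ is connected and each $P_u$ meets $V(S)$ in a single vertex, so a cycle through $S$ would force two distinct $P_u$-type paths to share a non-$S$ vertex while reaching $V(S)$ at different points, contradicting that the non-$S$ part is a forest hanging off $V(S)$. (The clean way to say this: contract $V(S)$ to one vertex; the non-$S$ edges used form a tree rooted there, and un-contracting restores a tree since $S$ itself is a tree.) So $S'$ is a tree containing $S$ and all of $U$, constructible in $O(m)$ time.

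**Then I would verify the two numbered properties.** For (i): for $u\in U$, the path $P_u\subseteq S'$ has length exactly $d_G(u,V(S))$ and its interior avoids $V(S)$, so $d_{S'}(u,V(S))\le d_G(u,V(S))$; the reverse inequality is automatic since $S'\subseteq G$. Hence equality. For (ii): let $\ell$ be a leaf of $S'$. If $\ell\in V(S)$, then $\ell$ has degree $1$ in $S$ as well — adding the paths $P_u$ only attaches new non-$S$ edges at vertices of $V(S)$, so if $\ell$ gained such an edge it would not be a leaf of $S'$; therefore $\ell\in L(S)$. If $\ell\notin V(S)$, then $\ell$ lies on some $P_u$; every internal vertex of every $P_u$ has degree $\ge 2$ in $S'$ (its parent edge and the edge toward $u$), and an endpoint of $P_u$ other than the $V(S)$-end is exactly the vertex $u\in U$. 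So a non-$S$ leaf must be in $U$, giving $L(S')\subseteq L(S)\cup U$.

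**The main obstacle is the acyclicity/leaf bookkeeping when the paths $P_u$ overlap**, since two vertices $u,u'\in U$ may share a common suffix of their BFS paths. The parent-pointer structure is exactly what makes this work: because $p$ is a function, the union $\bigcup_u P_u$ of root-directed paths in the BFS forest is itself a forest hanging off $V(S)$, so overlaps merge cleanly rather than creating cycles, and every vertex of this forest that is not one of the chosen $u$'s has a child, hence is not a leaf. Making that argument precise — and, in particular, using the tie-breaking rule $p(u)\in V(S)$ when $d_G(u,V(S))=1$ to guarantee each $P_u$ meets $V(S)$ only at its endpoint — is the one place where a little care is needed; everything else is a routine consequence of standard BFS properties.
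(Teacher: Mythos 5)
Your proposal is correct and follows essentially the same route as the paper: a breadth first search from the contracted/multi-source set $V(S)$, followed by taking $S$ together with the BFS parent-paths to the vertices of $U$, which is exactly the paper's ``minimal subtree of the uncontracted BFS tree containing $S$ and $U$.'' You merely spell out the acyclicity and leaf bookkeeping in more detail than the paper does.
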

\begin{proof}
The tree $S'$ with the desired properties can be obtained as follows:
\begin{itemize}
\item Construct the graph $G'$ from $G$ by contracting $S$ to a single vertex $r$.
\item Construct a breadth first search tree $T$ of $G'$ rooted in $r$.
\item Construct the graph $T'$ from $T$ by uncontracting $r$ back to $S$.
\item Choose $S'$ as the minimal subtree of $T'$ 
that contains $S$ as well as all vertices from $U$.
\end{itemize}
Since $T$ is a breadth first search tree, property (i) follows.
Furthermore, by construction, 
the set of leaves of $S'$ is contained in $L(S)\cup U$,
that is, property (ii) follows.
The running time follows easily from the running time 
of breadth first search;
in fact, the contraction of $S$ to $r$ can be handled 
implicitly within a suitably adapted breadth first search.
\end{proof}
The following lemma was inspired by Lemma 2.2 in \cite{krlemuprwa}.
It will be useful to complete the construction 
of our additive tree spanner
starting from a suitable subtree.

\begin{lemma}\label{lemma2}
Given a connected graph $G$ of size $m$
and a $\rho$-additive subtree $S$ of $G$
such that $d_G(u,V(S))\leq \rho'$ for every vertex $u$ of $G$,
one can construct in time $O(m)$
an additive tree $(\rho+4\rho')$-spanner of $G$.
\end{lemma}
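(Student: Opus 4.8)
The plan is to extend the $\rho$-additive subtree $S$ to a spanning tree of $G$ by attaching every remaining vertex through a shortest path to $S$, and then argue that the resulting tree only loses an additional additive $4\rho'$. First I would apply Lemma \ref{lemma1} with the present subtree $S$ and the set $U=V(G)$: this produces in time $O(m)$ a subtree $S'$ of $G$ that contains $S$, spans all of $V(G)$ (since $U=V(G)$), and satisfies $d_{S'}(u,V(S))=d_G(u,V(S))\le\rho'$ for every vertex $u$. Because $S'$ is a spanning subtree of $G$, it is already an additive tree $k$-spanner for some $k$, and the whole task reduces to bounding $k$.

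To bound $k$, fix two vertices $u,v\in V(G)$ and let $u',v'\in V(S)$ be vertices of $S$ closest to $u$ and $v$ respectively, so that $d_G(u,u')\le\rho'$ and $d_G(v,v')\le\rho'$. In $S'$ one can route from $u$ to $v$ by first going from $u$ to $u'$ along the path in $S'$ realizing $d_{S'}(u,V(S))$, then from $u'$ to $v'$ inside $S$, then from $v'$ to $v$; the triangle inequality in the tree $S'$ gives
\[
d_{S'}(u,v)\le d_{S'}(u,u')+d_{S}(u',v')+d_{S'}(v',v)\le \rho' + d_S(u',v') + \rho'.
\]
Now I use that $S$ is a $\rho$-additive subtree of $G$, so $d_S(u',v')\le d_G(u',v')+\rho$, and then the triangle inequality in $G$ gives $d_G(u',v')\le d_G(u',u)+d_G(u,v)+d_G(v,v')\le d_G(u,v)+2\rho'$. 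Combining these estimates yields
\[
d_{S'}(u,v)\le d_G(u,v)+\rho+4\rho',
\]
which is exactly the claimed bound. Together with the $O(m)$ running time of Lemma \ref{lemma1}, this completes the argument.

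The only point that needs a little care is the very first routing step: one must be sure that the $u'$ appearing in the ``$d_{S'}(u,V(S))$'' bound can be taken to be a genuine vertex of $S$ and that the corresponding path lies in $S'$ — this is precisely what property (i) of Lemma \ref{lemma1} guarantees, since $d_{S'}(u,V(S))=d_G(u,V(S))\le\rho'$ means there is a vertex of $S$ within distance $\rho'$ of $u$ in the tree $S'$ itself. After that observation, everything is two applications of the triangle inequality (once in the tree $S'$, once in $G$) plus the hypothesis on $S$, so I do not expect any real obstacle.
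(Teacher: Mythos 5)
Your proposal is correct and follows essentially the same route as the paper: apply Lemma \ref{lemma1} to extend $S$ to a spanning tree $S'$ (the paper uses $U=V(G)\setminus V(S)$ rather than $U=V(G)$, an immaterial difference here), route $u\to u'\to v'\to v$ through the nearest $S$-vertices in $S'$, and combine property (i) of Lemma \ref{lemma1} with the $\rho$-additivity of $S$ and two triangle inequalities to get $d_{S'}(u,v)\leq d_G(u,v)+\rho+4\rho'$. The point you flag at the end --- that $u'$ must be taken as the vertex of $S$ closest to $u$ \emph{within $S'$}, so that both $d_{S'}(u,u')\leq\rho'$ and $d_G(u,u')\leq\rho'$ hold --- is exactly how the paper sets it up.
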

\begin{proof}
Let $S'$ be the spanning tree of $G$ 
obtained by applying Lemma \ref{lemma1} 
to $G$, $S$, and $V(G)\setminus V(S)$ as the set $U$.
We claim that $S'$ has the desired properties.
Therefore, let $u$ and $v$ be any two vertices of $G$.
Let $u'$ be the vertex of $S$ closest to $u$ within $S'$,
and define $v'$ analogously.
Clearly, we have that 
$d_{S'}(u,u')=d_G(u,u')\leq \rho'$,
$d_{S'}(v,v')=d_G(v,v')\leq \rho'$, and 
$d_{S'}(u',v')=d_S(u',v')\leq d_G(u',v')+\rho$.
By several applications of the triangle inequality, 
we obtain
\begin{eqnarray*}
d_{S'}(u,v) & = & d_{S'}(u,u')+d_S(u',v')+d_{S'}(v',v)\\
& \leq & \rho'+d_G(u',v')+\rho+\rho'\\
& \leq & d_G(u',u)+d_G(u,v)+d_G(v,v')+\rho+2\rho'\\
& \leq & d_G(u,v)+\rho+4\rho',
\end{eqnarray*}
which completes the proof.
\end{proof}
Our next lemma states that ${\rm pbt}(T)$ can easily be determined
for a given tree $T$, by constructing a suitable finite sequence
\begin{eqnarray}\label{e2}
T_0\supset T_1\supset T_2\supset\ldots\supset T_{{\rm d}(T)}
\end{eqnarray}
of nested trees.
The construction of this sequence is also important 
for the proof of our main technical lemma,
cf.~Lemma \ref{lemma4} below.
The sequence starts with $T_0$ equal to $T$.
Now, suppose that $T_i$ has been defined 
for some non-negative integer $i$.
If $B(T_i)$ is not empty, 
then let $T_{i+1}$ be the minimal subtree of $T_i$ 
that contains all vertices from $B(T_i)$,
and continue the construction.
Note that in this case
$$B(T_i)=B(T_{i+1})\cup L(T_{i+1}).$$ 
Otherwise, if $B(T_i)$ is empty, then $T_i$ 
is a path of some length $\ell$.
If $\ell\geq 3$, then 
let $T_{i+1}$ be the tree containing exactly one 
internal vertex of $T_i$ as its only vertex,
and let ${\rm d}(T)=i+1$.
Finally, if $\ell\leq 2$, then let ${\rm d}(T)=i$.
Once ${\rm d}(T)$ has been defined, 
the construction of the sequence (\ref{e2}) terminates.
See Figure \ref{fig.seq} for an illustration.

\begin{figure}[H]
\begin{center}
\unitlength 0.75mm 
\linethickness{0.4pt}
\ifx\plotpoint\undefined\newsavebox{\plotpoint}\fi 
\begin{picture}(76,46)(0,0)
\put(24,26){\circle*{1.5}}
\put(36,41){\circle*{1.5}}
\put(51,32){\circle*{1.5}}
\put(40,28){\circle*{1.5}}
\put(55,19){\circle*{1.5}}
\put(10,26){\circle*{1.5}}
\put(18,36){\circle*{1.5}}
\put(45,41){\circle*{1.5}}
\put(41,19){\circle*{1.5}}
\put(27,16){\circle*{1.5}}
\put(27,6){\circle*{1.5}}
\put(44,10){\circle*{1.5}}
\put(37,10){\circle*{1.5}}
\put(10,26){\line(4,5){8}}
\put(18,36){\line(3,-5){6}}
\multiput(24,26)(.03370787,-.11235955){89}{\line(0,-1){.11235955}}
\put(27,16){\line(5,-3){10}}
\put(37,10){\line(1,0){7}}
\put(36,41){\line(-4,-5){12}}
\put(36,41){\line(1,0){9}}
\put(45,41){\line(2,-3){6}}
\multiput(51,32)(.033613445,-.109243697){119}{\line(0,-1){.109243697}}
\multiput(51,32)(-.092436975,-.033613445){119}{\line(-1,0){.092436975}}
\multiput(40,28)(.0333333,-.3){30}{\line(0,-1){.3}}
\multiput(36,10)(-.075630252,-.033613445){119}{\line(-1,0){.075630252}}
\put(2,20){\circle*{1.5}}
\put(14,15){\circle*{1.5}}
\put(6,7){\circle*{1.5}}
\put(10,26){\line(-4,-3){8}}
\multiput(10,26)(.033613445,-.092436975){119}{\line(0,-1){.092436975}}
\put(14,15){\line(-1,-1){8}}
\put(56,45){\circle*{1.5}}
\put(65,35){\circle*{1.5}}
\put(75,34){\circle*{1.5}}
\put(71,44){\circle*{1.5}}
\multiput(75,34)(-.3333333,.0333333){30}{\line(-1,0){.3333333}}
\put(65,35){\line(2,3){6}}
\multiput(65,35)(-.0337078652,.0374531835){267}{\line(0,1){.0374531835}}
\multiput(56,45)(-.092436975,-.033613445){119}{\line(-1,0){.092436975}}
\end{picture}
\linethickness{0.4pt}
\ifx\plotpoint\undefined\newsavebox{\plotpoint}\fi 
\begin{picture}(56,46)(0,0)
\put(24,26){\circle*{1.5}}
\put(36,41){\circle*{1.5}}
\put(51,32){\circle*{1.5}}
\put(10,26){\circle*{1.5}}
\put(18,36){\circle*{1.5}}
\put(45,41){\circle*{1.5}}
\put(27,16){\circle*{1.5}}
\put(37,10){\circle*{1.5}}
\put(10,26){\line(4,5){8}}
\put(18,36){\line(3,-5){6}}
\multiput(24,26)(.03370787,-.11235955){89}{\line(0,-1){.11235955}}
\put(27,16){\line(5,-3){10}}
\put(36,41){\line(-4,-5){12}}
\put(36,41){\line(1,0){9}}
\put(45,41){\line(2,-3){6}}
\put(56,45){\circle*{1.5}}
\put(65,35){\circle*{1.5}}
\multiput(65,35)(-.0337078652,.0374531835){267}{\line(0,1){.0374531835}}
\multiput(56,45)(-.092436975,-.033613445){119}{\line(-1,0){.092436975}}
\end{picture}
\linethickness{0.4pt}
\ifx\plotpoint\undefined\newsavebox{\plotpoint}\fi 
\begin{picture}(30,43)(0,0)
\put(24,26){\circle*{1.5}}
\put(36,41){\circle*{1.5}}
\put(45,41){\circle*{1.5}}
\put(36,41){\line(-4,-5){12}}
\put(36,41){\line(1,0){9}}
\end{picture}
\linethickness{0.4pt}
\ifx\plotpoint\undefined\newsavebox{\plotpoint}\fi 
\begin{picture}(38,43)(0,0)
\put(36,41){\circle*{1.5}}
\end{picture}
\end{center}
\vspace{-5mm}\caption{A sequence $T_0\subset T_1\subset T_2\subset T_3$.}\label{fig.seq}
\end{figure}
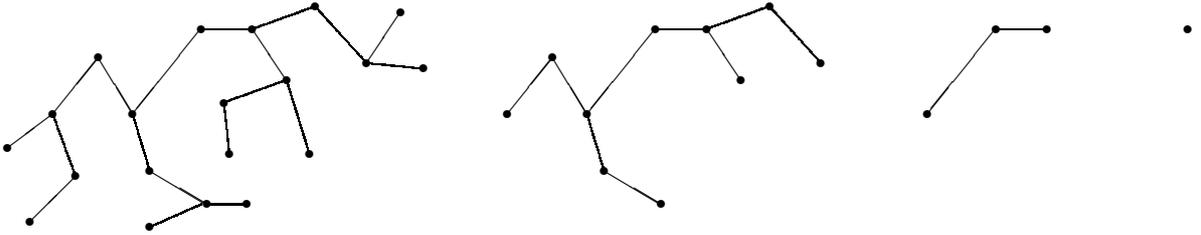

\begin{lemma}\label{lemma3}
${\rm pbt}(T)={\rm d}(T)$ for every tree $T$.
\end{lemma}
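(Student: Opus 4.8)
\medskip
\noindent\textbf{Proof plan.}\quad
The plan is to prove, for every tree $T$ with $n(T)\ge 3$, the recursion
\[
{\rm pbt}(T)={\rm pbt}(T_1)+1\qquad\text{and}\qquad {\rm d}(T)={\rm d}(T_1)+1,
\]
where $T_1$ is the second tree of the sequence~(\ref{e2}), together with the base case ${\rm pbt}(T)=0={\rm d}(T)$ when $n(T)\le 2$. Since $T_1$ omits every leaf of $T$ (and $n(T_1)<n(T)$), induction on $n(T)$ then yields ${\rm pbt}(T)={\rm d}(T)$. Throughout write ${\rm PBT}_b$ for the perfect binary tree of depth $b$, and recall that ${\rm PBT}_0\cong K_1$, that ${\rm PBT}_1\cong P_3$, and that ${\rm PBT}_{b}$ arises from ${\rm PBT}_{b-1}$ by appending two pendant edges at each of its leaves; equivalently, ${\rm PBT}_{b-1}$ is ${\rm PBT}_b$ with all its leaves deleted.

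The $\mathrm d$-recursion is immediate from the construction of~(\ref{e2}): applied to $T_1$, the construction produces the sequence $T_1\supset T_2\supset\cdots$, and it stops already at $T_0$ precisely when $T$ is a path on at most two vertices. Likewise ${\rm pbt}(T)=0$ holds exactly when $T$ contains no subdivision of ${\rm PBT}_1\cong P_3$, i.e.\ exactly when $n(T)\le 2$; this settles the base case and reduces the task to the $\mathrm{pbt}$-recursion for $n(T)\ge 3$. If $B(T)=\emptyset$, then $T$ is a path on at least three vertices, $T_1$ is a single vertex, and both sides of the recursion equal $1$. So assume henceforth $B(T)\ne\emptyset$, so that $T_1$ is the minimal subtree of $T$ containing $B(T)$ and, by the identity $B(T)=B(T_1)\cup L(T_1)$ recorded after~(\ref{e2}), $L(T_1)\subseteq B(T)$.

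For the inequality ${\rm pbt}(T)\le {\rm pbt}(T_1)+1$, I would take a subdivision $H$ of ${\rm PBT}_b$ in $T$ with $b={\rm pbt}(T)\ge 1$; if $b=1$ there is nothing to prove, and if $b\ge 2$ I delete from $H$ the $2^b$ branches that run to the images of the leaves of ${\rm PBT}_b$, keeping the images of its $2^{b-1}$ vertices of depth $b-1$. The remaining tree $H'$ is a subdivision of ${\rm PBT}_{b-1}$, and its leaves, being images of degree-$3$ vertices of ${\rm PBT}_b$, have degree at least $3$ in $T$ and hence belong to $B(T)$. Since $H'$ coincides with the minimal subtree of $T$ containing $L(H')$, and $T_1$ is the minimal subtree of $T$ containing $B(T)\supseteq L(H')$, we get $H'\subseteq T_1$, so ${\rm pbt}(T_1)\ge b-1$. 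The only thing used here is that the penultimate level of an embedded perfect binary tree consists of branch vertices of $T$ and is therefore absorbed into $T_1$.

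For the reverse inequality ${\rm pbt}(T)\ge {\rm pbt}(T_1)+1$, I would take a subdivision $H$ of ${\rm PBT}_b$ in $T_1$ with $b={\rm pbt}(T_1)$; if $b=0$ then ${\rm pbt}(T)\ge 1$ since $n(T)\ge 3$, so assume $b\ge 1$. I would first normalise $H$ so that every leaf of $H$ is a leaf of $T_1$: if a leaf $x$ of $H$ is not a leaf of $T_1$, then $x$ has a neighbour in $T_1$ outside $V(H)$, and prolonging the branch of $H$ at $x$ along a path through the corresponding component of $T_1-x$ (which is disjoint from $V(H)\setminus\{x\}$) until a leaf of $T_1$ is reached keeps $H$ a subdivision of ${\rm PBT}_b$; the components used at the various leaves of $H$ are pairwise disjoint, so all leaves of $H$ can be moved onto $L(T_1)$ simultaneously. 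Then every leaf $x$ of $H$ lies in $L(T_1)\subseteq B(T)$, so it has degree at least $3$ in $T$ but degree $1$ in $T_1$, hence at least two neighbours in $T$ outside $V(T_1)\supseteq V(H)$, lying in distinct components of $T-x$; moreover these neighbours are pairwise distinct over all leaves of $H$, since otherwise some vertex outside $T_1$ would lie on a $T$-path joining two vertices of $T_1$. Appending at each leaf of $H$ one pendant edge toward each of two such neighbours turns $H$ into a subdivision of ${\rm PBT}_{b+1}$ in $T$, so ${\rm pbt}(T)\ge b+1$, which completes the induction. I expect this ``$\ge$'' direction to be the main obstacle, specifically the normalisation step and the ensuing disjointness bookkeeping for the new pendant edges; both rely on the uniqueness of paths in trees together with the identity $B(T)=B(T_1)\cup L(T_1)$, whereas the ``$\le$'' direction is essentially a single observation.
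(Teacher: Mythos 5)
Your proof is correct and follows essentially the same route as the paper: both establish the two recursions ${\rm d}(T)={\rm d}(T_1)+1$ and ${\rm pbt}(T)={\rm pbt}(T_1)+1$ (the latter via exactly the same two inequalities — restricting an embedded perfect binary tree to $T_1$ by discarding its last level, and growing one level at the leaves of $T_1$ using the identity $B(T)=B(T_1)\cup L(T_1)$) and then induct. You merely supply details the paper leaves implicit, namely the normalisation of the embedded tree so that its leaves become leaves of $T_1$ and the disjointness bookkeeping for the appended branches.
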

\begin{proof}
The proof is by induction on ${\rm d}(T)$.
If ${\rm d}(T)=0$, the statement is trivial.
Now, let ${\rm d}(T)\geq 1$.
The construction of (\ref{e2}) immediately implies 
$${\rm d}(T)={\rm d}(T_1)+1.$$
Note that $T$ arises from $T_1$ by attaching 
at least two new paths to each leaf of $T_1$
so that each leaf of $T_1$ becomes a branch vertex of $T$.
Therefore, if $S_1$ is a subtree of $T_1$
that is a subdivision of a perfect binary tree,
then one can first extend $S_1$ in such a way that 
all leaves of $S_1$ are also leaves of $T_1$,
and then one can grow one further level 
to the subdivided binary tree 
by attaching two new paths to each leaf of $S_1$ 
using edges in $E(T)\setminus E(T_1)$.
This implies ${\rm pbt}(T)\geq {\rm pbt}(T_1)+1$.
Conversely,
if $S$ is a subtree of $T$
that is a subdivision of a perfect binary tree,
then $S\cap T_1$ contains 
a subdivision of a perfect binary tree
whose depth is one less,
that is, we have ${\rm pbt}(T_1)\geq {\rm pbt}(T)-1$.
Altogether, by induction, we obtain
$${\rm pbt}(T)={\rm pbt}(T_1)+1={\rm d}(T_1)+1={\rm d}(T),$$
which completes the proof.
\end{proof}
The following is our core technical lemma.

\begin{lemma}\label{lemma4}
Given a connected graph $G$ of size $m$ and a tree decomposition 
$\left(T,(X_t)_{t\in V(T)}\right)$ of $G$ of breadth $\rho$,
one can construct 
in time $O(m\cdot {\rm d}(T))$ a $16\rho\cdot {\rm d}(T)$-additive subtree $S$ of $G$ 
intersecting each bag of the given tree-decomposition.
\end{lemma}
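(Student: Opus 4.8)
The plan is to build the additive subtree $S$ by induction along the sequence $T_0 \supset T_1 \supset \dots \supset T_{{\rm d}(T)}$ from~(\ref{e2}), tracking how much additive error accumulates per level. At the bottom, $T_{{\rm d}(T)}$ is either a single vertex or a path of length at most $2$; its bags occupy a region of $G$ of bounded radius (at most $2\rho$-ish, using that consecutive bags intersect and each bag has radius $\rho$), so one can start with a trivial subtree $S_{{\rm d}(T)}$ — a shortest path joining representatives of the few bags involved — that is $O(\rho)$-additive and meets every bag of the restricted decomposition. This handles the base case.

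For the inductive step, suppose we have a $c$-additive subtree $S_{i+1}$ of $G$ meeting every bag $X_t$ with $t \in V(T_{i+1})$. We must extend it to a subtree $S_i$ meeting every bag indexed by $V(T_i)$. The key structural fact is that $T_i$ is obtained from $T_{i+1}$ by hanging pendant paths off the vertices of $T_{i+1}$ — each connected component of $T_i - E(T_{i+1})$ (minus the $T_{i+1}$-vertices) is a subtree of $T_i$ dangling from a single vertex $t^\ast \in V(T_{i+1})$, and along such a dangling branch the bags form a ``path-like'' chain of overlapping radius-$\rho$ bags rooted near $X_{t^\ast}$. Since $S_{i+1}$ already contains a vertex of $X_{t^\ast}$, every bag $X_t$ on that branch lies within distance $O(\rho \cdot (\text{combinatorial depth of the branch}))$ of $S_{i+1}$ — but we do not want a factor of the branch length. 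Instead, for each dangling branch we select, along the $T_i$-path from $t^\ast$ out to the far end, bag-representatives $v_0 \in X_{t^\ast}, v_1, v_2, \dots$ with consecutive ones at distance $\le 2\rho$ in $G$ (consecutive bags overlap, and each has radius $\le \rho$), and invoke Lemma~\ref{lemma1} with $S = S_{i+1}$ and $U$ equal to the set of these representatives over all branches: this attaches, to $S_{i+1}$, shortest connections realizing $d_G(\cdot, V(S_{i+1}))$ and keeps the new leaves among $L(S_{i+1}) \cup U$. Call the result $S_i$; it meets every bag indexed by $V(T_i)$.

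It remains to bound the additive error of $S_i$ in terms of $c$. The error is controlled by the maximum over new leaves $\ell \in U$ of $d_{S_i}(\ell, V(S_{i+1}))$: a path in $S_i$ between two vertices either stays in $S_{i+1}$ (error $\le c$), or detours out along at most two dangling branches, and on each branch the $S_i$-distance from the attachment region out to $\ell$ is at most (number of selected representatives on that branch) $\times 2\rho$. The number of selected representatives on a branch is at most the combinatorial depth of that branch in $T_i$, which — and this is the crux — is bounded by $2$, because $T_{i+1}$ was defined to be the minimal subtree of $T_i$ spanning all branch vertices $B(T_i)$, so each component of $T_i$ hanging off $T_{i+1}$ is a path incident to exactly one leaf-or-branch structure: in fact these dangling pieces are paths, and $S_{i+1}$ already reaches $X_{t^\ast}$, so we only need a constant number of hops to cover them. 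Thus $d_{S_i}(\ell, V(S_{i+1})) = O(\rho)$ for every new leaf, and Lemma~\ref{lemma2}-style triangle-inequality bookkeeping gives that $S_i$ is $(c + O(\rho))$-additive. Summing over the ${\rm d}(T)$ levels yields a $16\rho \cdot {\rm d}(T)$-additive subtree $S = S_0$ meeting every bag, and the running time is $O(m)$ per level by Lemma~\ref{lemma1}, for a total of $O(m \cdot {\rm d}(T))$.

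The main obstacle I anticipate is making the per-level error increment genuinely a constant times $\rho$ rather than something that scales with the length of the dangling branches. The honest way to do this is to be careful that the ``representatives'' selected along a branch are few — exploiting that $T_{i+1}$ is the \emph{minimal} subtree spanning $B(T_i)$, so that what dangles off it at each level has bounded combinatorial complexity — and that Lemma~\ref{lemma1}(i) lets each representative be reached from $S_{i+1}$ by a \emph{shortest} $G$-path, so the relevant quantity is $d_G(v_j, V(S_{i+1}))$, which telescopes through the overlapping bags back to $X_{t^\ast} \cap V(S_{i+1})$. Pinning down the exact constant (the paper claims $16$) is then routine triangle-inequality accounting of the form already seen in the proof of Lemma~\ref{lemma2}.
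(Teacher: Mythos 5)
Your overall skeleton --- inducting along the sequence $T_0\supset T_1\supset\ldots\supset T_{{\rm d}(T)}$ and using Lemma~\ref{lemma1} to attach representatives of the newly required bags at each level --- matches the paper. The gap is in the crux of the inductive step. You claim that each piece of $T_i$ dangling off $T_{i+1}$ has ``combinatorial depth bounded by $2$,'' so that every new leaf $\ell$ satisfies $d_{S_i}(\ell,V(S_{i+1}))=O(\rho)$ and the per-level error increment is $O(\rho)$ by Lemma~\ref{lemma2}-style bookkeeping. This is false: $T_{i+1}$ is the minimal subtree of $T_i$ spanning $B(T_i)$, so the pieces hanging off it are pendant \emph{paths of unbounded length} (every internal vertex of such a pendant path has degree $2$ in $T_i$, so minimality removes none of it). Consequently a bag $X_t$ with $t$ a leaf of $T_i$ can be at $G$-distance $\Theta(\rho\cdot\mbox{length of the pendant path})$ from $V(S_{i+1})$, and the new leaf placed in it is correspondingly far from $S_{i+1}$ both in $G$ and in $S_i$ (think of $G$ a spider with three long legs: $S_{i+1}$ is a single central vertex and the new leaves sit at the far ends of the legs). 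So no argument that tries to bound $d_{S_i}(\ell,V(S_{i+1}))$ by $O(\rho)$ can succeed, and the Lemma~\ref{lemma2}-style accounting you invoke does not apply.

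The paper's proof never bounds that distance. Instead it bounds the additive error directly: it maintains the extra invariant that any two leaves $u,v$ of $S_i$ lie in bags of two \emph{distinct} vertices $s,t\in B(T_i)\cup L(T_i)$, observes that the $s$--$t$ path $Q$ in $T_{i-1}$ must pass through the subtree $T'$ of bags already met by $S_i$ (because $Q$ passes through $V(T_i)$ and $S_i$ meets every bag of $T_i$), and uses the separator property of the bag $X_{s'}$, where $s'$ is the vertex of $T'$ on $Q$ nearest to $s$, to produce a vertex $u^{(3)}\in X_{s'}$ lying \emph{on the shortest $u$--$v$ path $P$ in $G$} together with a vertex $u^{(4)}\in X_{s'}$ on the tree path $P_{i-1}$. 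Combining this with Lemma~\ref{lemma1}(i) shows that the point $u^{(1)}$ where the branch to $u$ attaches to $S_i$ satisfies $d_G\left(u^{(1)},u^{(3)}\right)\leq 4\rho$; that is, the (possibly very long) detour re-enters $S_i$ within distance $4\rho$ of the true shortest path, and the $16\rho$ per-level increment then follows by the triangle inequality. This alignment-with-$P$ argument, and the leaf invariant that makes it applicable, are the ideas missing from your proposal.
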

\begin{proof}
Let the sequence 
$T_0\supset T_1\supset T_2\supset\ldots\supset T_{{\rm d}(T)}$
be as in (\ref{e2}),
and let $d={\rm d}(T)$.
For $i$ from $d$ down to $0$, 
we explain how to recursively construct 
a subtree $S_i$ of $G$ such that
\begin{enumerate}[(i)]
\item $S_i$ contains a vertex from bag $X_t$ 
for every vertex $t$ of $T_i$,
\item for every two distinct leaves $u$ and $v$ of $S_i$,
there are two distinct vertices $s$ and $t$ 
of $T_i$ that belong to $B(T_i)\cup L(T_i)$
such that $u\in X_s$ and $v\in X_t$, and
\item $S_i$ is $16\rho(d-i)$-additive.
\end{enumerate}
Note that $S_0$ is a subtree of $G$ with the desired properties.

First, we consider $i=d$.
The tree $T_d$ has order at most $2$,
and, since $G$ is connected,
there is a vertex $u$ of $G$ 
that belongs to all bags $X_t$ with $t\in V(T_d)$.
Let $S_d$ be the subtree of $G$ containing only the vertex $u$.
Since $S_d$ has order $1$, and all vertices of $T_d$ are leaves, 
properties (ii) and (iii) are trivial for $S_d$, 
and property (i) follows from the choice of $u$.
See Figure \ref{figinS} for an illustration.

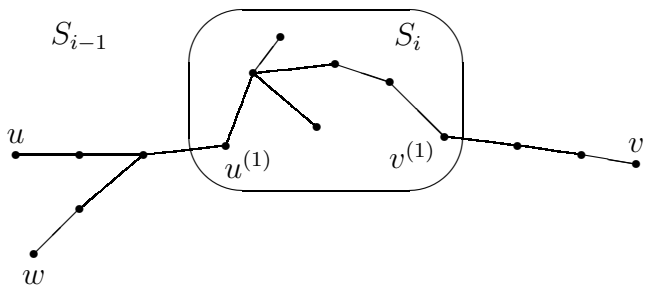
\begin{figure}[H]
\begin{center}
\unitlength 1.2mm 
\linethickness{0.4pt}
\ifx\plotpoint\undefined\newsavebox{\plotpoint}\fi 
\begin{picture}(71,30)(0,0)
\put(36,20){\oval(30,20)[]}
\put(25,15){\circle*{1}}
\put(28,23){\circle*{1}}
\put(35,17){\circle*{1}}
\put(43,22){\circle*{1}}
\put(49,16){\circle*{1}}
\put(37,24){\circle*{1}}
\multiput(28,23)(-.03370787,-.08988764){89}{\line(0,-1){.08988764}}
\multiput(28,23)(.039325843,-.033707865){178}{\line(1,0){.039325843}}
\multiput(28,23)(.3,.0333333){30}{\line(1,0){.3}}
\put(37,24){\line(3,-1){6}}
\put(43,22){\line(1,-1){6}}
\put(57,15){\circle*{1}}
\put(64,14){\circle*{1}}
\put(70,13){\circle*{1}}
\put(16,14){\circle*{1}}
\put(9,14){\circle*{1}}
\put(2,14){\circle*{1}}
\put(9,8){\circle*{1}}
\put(4,3){\circle*{1}}
\put(2,14){\line(1,0){14}}
\multiput(16,14)(-.039325843,-.033707865){178}{\line(-1,0){.039325843}}
\put(9,8){\line(-1,-1){5}}
\multiput(16,14)(.3,.0333333){30}{\line(1,0){.3}}
\put(9,27){\makebox(0,0)[cc]{$S_{i-1}$}}
\put(45,27){\makebox(0,0)[cc]{$S_i$}}
\multiput(49,16)(.2666667,-.0333333){30}{\line(1,0){.2666667}}
\multiput(57,15)(.2333333,-.0333333){30}{\line(1,0){.2333333}}
\put(64,14){\line(6,-1){6}}
\put(27.5,13){\makebox(0,0)[cc]{$u^{(1)}$}}
\put(45.5,14){\makebox(0,0)[cc]{$v^{(1)}$}}
\put(2,16){\makebox(0,0)[cc]{$u$}}
\put(4,0.5){\makebox(0,0)[cc]{$w$}}
\put(70,15){\makebox(0,0)[cc]{$v$}}
\put(31,27){\circle*{1}}
\put(31,27){\line(-3,-4){3}}
\end{picture}
\end{center}
\caption{Extending $S_i$ to $S_{i-1}$, 
and possible positions of the vertices $u$, $v$, $u^{(1)}$, and $v^{(1)}$
explained below.}\label{figinS}
\end{figure}
Now, suppose that $S_i$ has already been defined 
for some integer $i$ with $d\geq i>0$. 
We explain how to construct $S_{i-1}$.
Therefore, let $U$ be an inclusion-wise minimal set of vertices 
intersecting every bag $X_t$
such that $t$ is a leaf of $T_{i-1}$
for which $S_i$ does not contain a vertex from $X_t$.
Let $S_{i-1}$ arise by applying 
Lemma \ref{lemma1}
to $G$, $S_i$ as $S$, and $U$.
By construction,
the subgraph $S_{i-1}$ of $G$ is connected
and contains a vertex from every bag $X_t$
such that $t$ is a leaf of $T_{i-1}$.
Since $G$ is connected, 
basic properties of tree decompositions
imply that $S_{i-1}$ satisfies property (i),
that is, the vertex set of $S_{i-1}$ intersects every bag of $T_{i-1}$.

Next, we verify property (ii) for $S_{i-1}$.
Therefore, let $u$ and $v$ be two distinct leaves of $S_{i-1}$.
If $u$ and $v$ are also leaves of $S_i$,
then property (ii) for $S_{i-1}$ follows
from property (ii) for $S_i$
using $B(T_i)\cup L(T_i)=B(T_{i-1})$.
If $u$ is a leaf of $S_i$ and $v$ is not,
then, by Lemma \ref{lemma1}(ii),
we have $v\in U$.
By property (ii) for $S_i$,
the vertex $u$ belongs to a bag $X_s$ 
such that $s\in B(T_i)\cup L(T_i)=B(T_{i-1})$,
and, by the choice of $U$,
the vertex $v$ belongs to a bag $X_t$
such that $t$ is a leaf of $T_{i-1}$
and $S_i$ contains no vertex from $X_t$.
In particular, we have that $u\not\in X_t$,
which implies that $s$ and $t$ are distinct,
that is, property (ii) holds also in this case.
Finally, suppose that $u$ and $v$ 
are both leaves of $S_{i-1}$ but not of $S_i$.
The choice of $U$ as minimal with respect to inclusion
implies that property (ii) holds also in this final case.
Note that $X_s$ is allowed to contain $v$
and that $X_t$ is allowed to contain $u$
in property (ii).

Finally, we verify the crucial property (iii) for $S_{i-1}$.
Therefore, let $u$ and $v$ be two distinct vertices of $S_{i-1}$.
It is easy to see that in order to verify that $S_{i-1}$
is $16\rho(d-(i-1))$-additive,
it suffices to consider the case where $u$ and $v$ are leaves of $S_{i-1}$.
In fact, if (iii) is violated for $u$ and $v$, that is, 
we have $d_{S_{i-1}}(u,v)>d_G(u,v)+16\rho(d-(i-1))$, 
then the path in $S_{i-1}$ between $u$ and $v$ is contained 
in some path in $S_{i-1}$ between 
the two leaves $\tilde{u}$ and $\tilde{v}$ of $S_{i-1}$, and
\begin{eqnarray*}
d_{S_{i-1}}(\tilde{u},\tilde{v})
&=&d_{S_{i-1}}(\tilde{u},u)
+d_{S_{i-1}}(u,v)
+d_{S_{i-1}}(v,\tilde{v})\\
&>&d_G(\tilde{u},u)
+d_G(u,v)+16\rho(d-(i-1))+d_G(v,\tilde{v})\\
&\geq &
d_G(\tilde{u},\tilde{v})+16\rho(d-(i-1)),
\end{eqnarray*}
that is, the two leaves $\tilde{u}$ and $\tilde{v}$
also violate (iii).
Hence, we may assume that $u$ and $v$ are leaves of $S_{i-1}$.
Let $P$ be a shortest path in $G$ between $u$ and $v$,
and let $P_{i-1}$ be the path in $S_{i-1}$ between $u$ and $v$.
Let $u^{(1)}$ be the vertex of $S_i$ that is closest within $S_{i-1}$ to $u$,
and define $v^{(1)}$ analogously.
See Figure \ref{figinS} for an illustration.
By Lemma \ref{lemma1}(i), we have 
\begin{eqnarray*}
d_G\left(u,u^{(1)}\right) & = & d_{S_{i-1}}\left(u,u^{(1)}\right)\mbox{ and }\\
d_G\left(v,v^{(1)}\right) & = & d_{S_{i-1}}\left(v,v^{(1)}\right).
\end{eqnarray*}
By (ii) for $S_{i-1}$,
there are two distinct vertices $s$ and $t$ of $T_{i-1}$
that belong to $B(T_{i-1})\cup L(T_{i-1})$ such that $u\in X_s$ and $v\in X_t$.
Let $T'$ be the subgraph of $T_{i-1}$ that is induced 
by the set of all vertices $r$ of $T_{i-1}$ 
for which $S_i$ contains a vertex from the bag $X_r$.
Since $S_i$ is connected, 
it follows from basic properties of tree decompositions
that $T'$ is a subtree of $T_{i-1}$.
Since $B(T_{i-1})=B(T_i)\cup L(T_i)\subseteq V(T_i)$
and, by construction of $T_i$ from $T_{i-1}$, 
the path in $T_{i-1}$ between any two distinct leaves of $T_{i-1}$
contains a vertex of $T_i$,
property (i) for $S_i$ implies that $T'$ 
contains a vertex from the path $Q$ in $T_{i-1}$ between $s$ and $t$.
Let $s'$ be the vertex of $T'$ on $Q$ that is closest within $T_{i-1}$ to $s$.
By the definition of $T'$,
there is a vertex $u^{(2)}$ of $S_i$ that belongs to $X_{s'}$.
See Figure \ref{figinT} for an illustration.

\begin{figure}[H]
\begin{center}
\unitlength 1mm 
\linethickness{0.4pt}
\ifx\plotpoint\undefined\newsavebox{\plotpoint}\fi 
\begin{picture}(107.5,39.5)(0,0)
\put(7,7){\circle*{1}}
\put(17,7){\circle*{1}}
\put(27,7){\circle*{1}}
\put(37,7){\circle*{1}}
\put(47,7){\circle*{1}}
\put(57,7){\circle*{1}}
\put(67,7){\circle*{1}}
\put(77,7){\circle*{1}}
\put(87,7){\circle*{1}}
\put(97,7){\circle*{1}}
\put(107,7){\circle*{1}}
\put(7,7){\line(1,0){100}}
\put(52,16.5){\oval(40,25)[]}
\put(37,10){\makebox(0,0)[cc]{$s'$}}
\put(68,10){\makebox(0,0)[cc]{$t'$}}
\put(49,13){\circle*{1}}
\put(43,18){\circle*{1}}
\put(50,23){\circle*{1}}
\put(33,38){\circle*{1}}
\put(36,23){\circle*{1}}
\put(64,24){\circle*{1}}
\put(47,39){\circle*{1}}
\put(55,19){\circle*{1}}
\put(38,34){\circle*{1}}
\put(64,17){\circle*{1}}
\put(47,32){\circle*{1}}
\put(73,34){\circle*{1}}
\put(58,37){\circle*{1}}
\put(33,38){\line(5,-4){5}}
\multiput(38,34)(-.03333333,-.18333333){60}{\line(0,-1){.18333333}}
\multiput(36,23)(.046979866,-.033557047){149}{\line(1,0){.046979866}}
\put(43,18){\line(6,-5){6}}
\put(49,13){\line(-1,-3){2}}
\multiput(73,34)(-.0337078652,-.0374531835){267}{\line(0,-1){.0374531835}}
\multiput(64,24)(-.033707865,.073033708){178}{\line(0,1){.073033708}}
\put(50,23){\line(5,-4){5}}
\put(55,19){\line(-1,-1){6}}
\multiput(64,17)(-.033653846,-.048076923){208}{\line(0,-1){.048076923}}
\put(47,39){\line(0,-1){7}}
\put(47,32){\line(1,-3){3}}
\put(15,25){\makebox(0,0)[cc]{in $T_{i-1}$}}
\put(70,0){\makebox(0,0)[cc]{$Q$ ($s$-$t$-path in $T_{i-1}$)}}
\put(7,10){\makebox(0,0)[cc]{$s$}}
\put(107,10){\makebox(0,0)[cc]{$t$}}
\put(107,2){\makebox(0,0)[cc]{($v\in X_t$)}}
\put(7,2){\makebox(0,0)[cc]{($u\in X_s$)}}
\put(68,21){\makebox(0,0)[cc]{$T'$}}
\multiput(64,24)(-.060402685,-.033557047){149}{\line(-1,0){.060402685}}
\end{picture}
\end{center}
\caption{The path $Q$ in $T_{i-1}$ between $s$ and $t$,
the subtree $T'$ of $T_{i-1}$ intersecting $Q$,
and the vertices $s'$ and $t'$.}\label{figinT}
\end{figure}
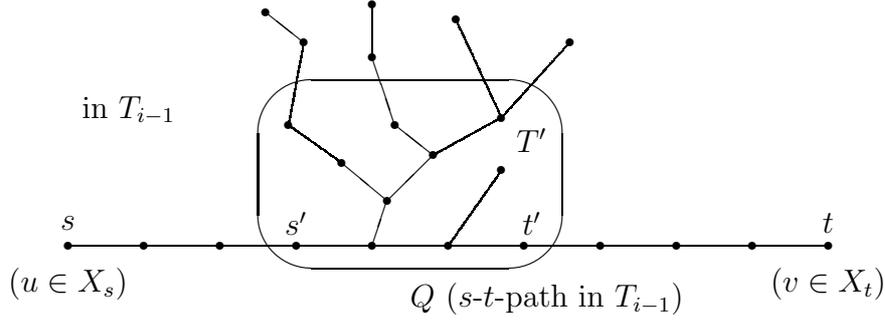
Basic properties of tree decompositions imply
that $X_{s'}$ contains a vertex from the path $P$ as well as from the path $P_{i-1}$.
Let $u^{(3)}$ be a vertex in $X_{s'}\cap V(P)$,
and let $u^{(4)}$ be the first vertex on the path $P_{i-1}$, 
when traversed from $u$ towards $v$,
that belongs to $X_{s'}$.
See Figure \ref{figinG} for an illustration.

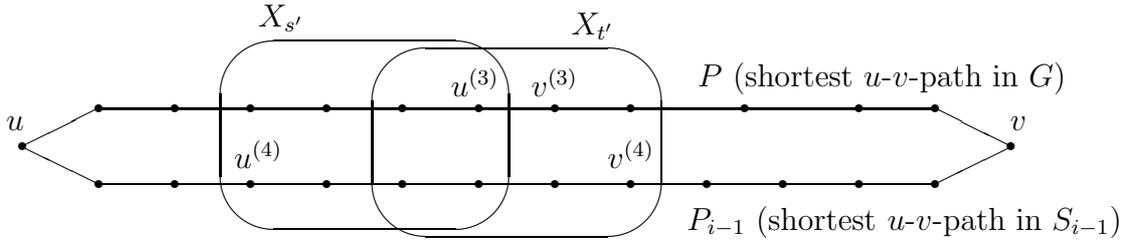
\begin{figure}[H]
\begin{center}
\unitlength 1mm 
\linethickness{0.4pt}
\ifx\plotpoint\undefined\newsavebox{\plotpoint}\fi 
\begin{picture}(135,32)(0,0)
\put(4,14){\circle*{1}}
\put(14,19){\circle*{1}}
\put(24,19){\circle*{1}}
\put(34,19){\circle*{1}}
\put(44,19){\circle*{1}}
\put(54,19){\circle*{1}}
\put(64,19){\circle*{1}}
\put(74,19){\circle*{1}}
\put(84,19){\circle*{1}}
\put(99,19){\circle*{1}}
\put(114,19){\circle*{1}}
\put(124,19){\circle*{1}}
\put(14,9){\circle*{1}}
\put(24,9){\circle*{1}}
\put(34,9){\circle*{1}}
\put(44,9){\circle*{1}}
\put(54,9){\circle*{1}}
\put(64,9){\circle*{1}}
\put(74,9){\circle*{1}}
\put(84,9){\circle*{1}}
\put(94,9){\circle*{1}}
\put(104,9){\circle*{1}}
\put(114,9){\circle*{1}}
\put(124,9){\circle*{1}}
\put(134,14){\circle*{1}}
\put(4,14){\line(2,1){10}}
\put(14,19){\line(1,0){110}}
\put(124,19){\line(2,-1){10}}
\put(134,14){\line(-2,-1){10}}
\put(124,9){\line(-1,0){110}}
\put(14,9){\line(-2,1){10}}
\put(49,15.5){\oval(38,25)[]}
\put(69,14.5){\oval(38,25)[]}
\put(39,32){\makebox(0,0)[cc]{}}
\put(38,31){\makebox(0,0)[cc]{$X_{s'}$}}
\put(79,30){\makebox(0,0)[cc]{$X_{t'}$}}
\put(63.5,22.5){\makebox(0,0)[cc]{$u^{(3)}$}}
\put(35,13){\makebox(0,0)[cc]{$u^{(4)}$}}
\put(74,22.5){\makebox(0,0)[cc]{$v^{(3)}$}}
\put(3,17){\makebox(0,0)[cc]{$u$}}
\put(135,17){\makebox(0,0)[cc]{$v$}}
\put(117,23){\makebox(0,0)[cc]{$P$ (shortest $u$-$v$-path in $G$)}}
\put(120,4){\makebox(0,0)[cc]{$P_{i-1}$ (shortest $u$-$v$-path in $S_{i-1}$)}}
\put(84,13){\makebox(0,0)[cc]{$v^{(4)}$}}
\end{picture}
\end{center}
\caption{The shortest paths $P$ in $G$ and $P_{i-1}$ in $S_{i-1}$
between $u$ and $v$, 
their intersection with the bags $X_{s'}$ and $X_{t'}$,
the vertices $u^{(4)}$ and $v^{(4)}$,
and possible positions of $u^{(3)}$ and $v^{(3)}$.}\label{figinG}
\end{figure}
Suppose, for a contradiction, 
that $u^{(1)}$ is distinct from $u^{(4)}$, 
and that $u^{(1)}$ lies closer to $u$ on $P_{i-1}$ than $u^{(4)}$.
In this case, the choices of $u^{(1)}$ and $u^{(4)}$ imply 
that $u^{(1)}$ lies in some bag $X_r$ for a vertex $r$ of $T'$ distinct from $s'$,
and that $u^{(1)}$ does not lie in $X_{s'}$.
Since $s'$ separates $s$ from $r$ in $T_{i-1}$,
basic properties of tree decompositions imply
that $P_{i-1}$ contains a vertex from $X_{s'}$
that is strictly closer to $u$ than $u^{(4)}$,
contradicting the choice of $u^{(4)}$.
Hence, either $u^{(1)}$ equals $u^{(4)}$, 
or $u^{(4)}$ lies closer to $u$ on $P_{i-1}$ than $u^{(1)}$.

Since $u^{(2)}$, $u^{(3)}$, and $u^{(4)}$ all belong to the bag $X_{s'}$,
which is of radius at most $\rho$, 
the pairwise distances of these three vertices within $G$ are at most $2\rho$.
If $d_G\left(u^{(1)},u^{(4)}\right)>2\rho$,
then connecting $u$ to $u^{(4)}$ via $P_{i-1}$,
and connecting $u^{(4)}$ to $S_i$ via a shortest path in $G$,
which is of length at most $2\rho$ in view of $u^{(2)}$,
yields a contradiction to Lemma \ref{lemma1}(i).
Hence, we have 
$$d_G\left(u^{(1)},u^{(4)}\right)\leq 2\rho,$$
and, thus, we obtain
\begin{eqnarray*}
d_G\left(u^{(1)},u^{(3)}\right) 
& \leq & 
d_G\left(u^{(1)},u^{(4)}\right)+d_G\left(u^{(4)},u^{(3)}\right)
\leq 4\rho.
\end{eqnarray*}
Now, let $t'$ be the vertex of $T'$ on $Q$
that is closest within $T_{i-1}$ to $t$.
See Figure \ref{figinT} for an illustration.
Clearly, the vertex $t'$ lies on the subpath of $Q$ between $s'$ and $t$.
Since $u^{(3)}\in X_{s'}$ and $v\in X_t$,
basic properties of tree decompositions imply
that the subpath of $P$ between $u^{(3)}$ and $v$
contains a vertex $v^{(3)}$ of $X_{t'}$.
See Figure \ref{figinG} for an illustration.
Choosing $v^{(2)}$ and $v^{(4)}$ in a symmetric way,
and arguing similarly as above,
we obtain 
\begin{eqnarray*}
d_G\left(v^{(1)},v^{(3)}\right) \leq 4\rho.
\end{eqnarray*}
By property (iii) for $S_i$, we have
\begin{eqnarray*}
d_{S_i}\left(u^{(1)},v^{(1)}\right) \leq d_G\left(u^{(1)},v^{(1)}\right)
+16\rho(d-i).
\end{eqnarray*}
Note that the vertices $u$, $u^{(3)}$, $v^{(3)}$, and $v$ appear in this order on $P$.
Altogether, by multiple applications of the triangle inequality, we obtain
that 
\begin{eqnarray*}
d_{S_{i-1}}\left(u,v\right)
&= & d_{S_{i-1}}\left(u,u^{(1)}\right)
+d_{S_i}\left(u^{(1)},v^{(1)}\right)
+d_{S_{i-1}}\left(v^{(1)},v\right)\\
& = & 
d_G\left(u,u^{(1)}\right)
+d_{S_i}\left(u^{(1)},v^{(1)}\right)
+d_G\left(v^{(1)},v\right)\\
& \leq & 
d_G\left(u,u^{(1)}\right)
+d_G\left(u^{(1)},v^{(1)}\right)+16\rho(d-i)
+d_G\left(v^{(1)},v\right)\\
& \leq & 
d_G\left(u,u^{(3)}\right)+d_G\left(u^{(3)},u^{(1)}\right)\\
&&+d_G\left(u^{(1)},u^{(3)}\right)+d_G\left(u^{(3)},v^{(3)}\right)+d_G\left(v^{(3)},v^{(1)}\right)+16\rho(d-i)\\
&&+d_G\left(v^{(1)},v^{(3)}\right)+d_G\left(v^{(3)},v\right)\\
& \leq & 
d_G\left(u,u^{(3)}\right)+4\rho
+4\rho+d_G\left(u^{(3)},v^{(3)}\right)+4\rho
+16\rho(d-i)+4\rho+d_G\left(v^{(3)},v\right)\\
& = & 
d_G\left(u,v\right)+16\rho(d-(i-1)),
\end{eqnarray*}
which completes the proof of property (iii) for $S_{i-1}$.

We proceed to the running time of the described procedure.
Clearly, the sequence as in (\ref{e2})
can be determined in time $O(m\cdot {\rm d}(T))$,
and the tree $S_d$ can be obtained in time $O(m(G))$.
By Lemma \ref{lemma1}, given any tree $S_i$ with $i>0$,
the tree $S_{i-1}$ can be obtained in time $O(m(G))$.
Altogether, the stated running time follows,
which completes the proof.
\end{proof}
Theorem \ref{theorem1} now follows immediately
by combining Lemma \ref{lemma4} with Lemma \ref{lemma2}, 
choosing $\rho'$ equal to $2\rho$ for the latter.
Note that, since the tree $S$ produced by Lemma \ref{lemma4} 
intersects every bag of the tree decomposition, 
we have $d_G(u,V(S))\leq 2\rho$ for every vertex $u$ of $G$.

\end{document}